\documentclass[12pt]{amsart}\hoffset=-3cm\leftmargin-1cm
\usepackage{amsmath, amssymb}
\usepackage{hyperref}
\usepackage[displaymath, mathlines]{lineno}
\newtheorem{theorem}{Theorem}[section]
\newtheorem{corollary}[theorem]{Corollary}
\newtheorem{lemma}[theorem]{Lemma}

\numberwithin{equation}{section}

\begin{document}
\title [On starlikeness of $p$-valent  analytic functions ]
       {On starlikeness of $p$-valent  analytic functions}

\author[M. Nunokawa, J. Sok\'{o}\l]
       {Mamoru Nunokawa$^1$ and Janusz Sok\'{o}\l$^{*,2}$}

\address{$^1$  University of Gunma, Hoshikuki-cho 798-8,
         Chuou-Ward, Chiba, 260-0808, Japan}
         \email{mamorununokawa1983@gmail.com}

\address{$^2$ University of Rzesz\'{o}w, College Natural
         Sciences,
         ul. Prof. Pigonia 1, 35-310 Rzesz\'{o}w, Poland}
         \email{jsokol@ur.edu.pl}

\keywords{ analytic functions, Ozaki's condition, univalent, multivalent, starlike functions\\
            *corresponding author jsokol@ur.edu.pl}

\dedicatory{Dedicated to  Sachiko Nunokawa}

\begin{abstract}
The  known Ozaki's condition  says that
$\mathfrak{Re}\left\{f^{(p)}(z)\right\}>0$ for $|z|<1$
implies that $f(z)=z^p+a_{p+1}z^{p+1}+\cdots$ is at most $p$-valent in $\mathbb D$.
In this paper prove an extension of Ozaki's condition.
Also, we shall determine the new sufficient conditions for
functions to be in the class of $p$-valent starike  of order $\alpha$.
~~\\
{\bf Mathematics Subject Classification} 30C45 $\cdot$ 30C80
\end{abstract}
\maketitle

\section{Introduction}
Let ${\mathcal H}$ denote the class of analytic functions in the
unit disc ${{\mathbb D}}=\{z:\ |z|<1\}$ on the complex plane
${\mathbb{C}}$. We will use the following notations:
\begin{equation*}\label{J}
\left\{
\begin{array}{rcl}
   J_{CV}(f;z)&:=&1+\frac{zf''(z)}{f'(z)},\\
   J_{ST}(f;z)&:=&\frac{zf'(z)}{f(z)}.
\end{array}
\right.
\end{equation*}
A function $f$ analytic in a domain $D \in\mathbb{C}$
is called $p$-valent in $D$, if for every complex number $w$, the equation $%
f(z)= w$ has at most $p$ roots in $D$, so that there exists a complex number $%
w_0$ such that the equation $f(z) = w_0$ has exactly $p$ roots in
$D$. We denote by $\mathcal{H}$  the class of functions $f(z)$ which
are holomorphic in the open unit unit $\mathbb D=\{z\in \mathbb C:
|z|<1\}$. Denote by $\mathcal A_p$, $p\in\mathbb N=\{1,2,\ldots\} $,
the class of functions $f(z)\in\mathcal H$ given by
\begin{equation*}\label{fp}
    f(z)=z^p+ \sum_{n=p+1}^{\infty}a_{n}z^{n},\quad (z\in\mathbb D).
\end{equation*}
Let $\mathcal A=\mathcal A_1$. Let $\mathcal{S}$ denote the class
of all functions in $\mathcal{A}$
which are univalent in $\mathbb D$. Also let $\mathcal{S}_p^{\ast }(\alpha )$ and $\mathcal{C}_p%
(\alpha )$ be the subclasses of $\mathcal{A}_p$ consisting of all
$p$-valent functions which are strongly starlike and strongly convex
of order $\alpha $, $0< \alpha \leq1$, defined as
\begin{eqnarray*}
    \mathcal{S}_p^{\ast }(\alpha )
    &=&\left\{ f(z)\in \mathcal{A}(p):\left|\arg \left\{ J_{ST}(f;z)\right\}\right| <\frac{\alpha\pi}{2} ,\ z\in \mathbb D\right\}, \\
    && \\
    \mathcal{C}_p(\alpha ) &=&\left\{ f(z)\in \mathcal{A}(p):zf'(z)/p\in
    \mathcal{S}_p^{\ast }(\alpha )\right\} .
\end{eqnarray*}%
Note that $\mathcal{S}_1^{\ast }(1)=\mathcal{S}^{\ast }$ and $\mathcal{C}_1(1)=%
\mathcal{C}$, where $\mathcal{S}^{\ast }$ and $\mathcal{C}$ are
usual classes of starlike and convex functions respectively. The
known Ozaki's condition says that
\begin{equation*}
 \mathfrak{Re}\left\{f^{(p)}(z)\right\}>0,\quad (z\in
 \mathbb D)
\end{equation*}
follows that $f(z)$ is at most $p$-valent in $\mathbb D$.

In this paper we shall determine the new sufficient conditions for
functions to be in the class $\mathcal{S}_p^{\ast }(\alpha )$. The
key in proving is  Nunokawa's lemma \cite{Nunokawa92} and the
following Lemma \ref{l1} which generalizes it, see also
\cite{FuSa80}, \cite{Nunokawa93}. Note, that the geometric
interpretation of the Nunokawa's is similar to the geometric
interpretation of the Jack's lemma, \cite{Jack}.

\begin{lemma}\label{l1}\cite{Nunokawa93}
Let   $q(z)=1+\sum_{n=m}^{\infty}c_nz^n$, $c_m\neq 0$ be an analytic
function in $\mathbb{D}$ with  $q(z)\neq 0$. If there exists a point
$z_0$, $|z_0|<1$, such that $|{\arg}
\left\{q(z)\right\}|<\pi\gamma/2$ for $|z|<|z_0|$ and
$|{\arg}\left\{ q(z_0)\right\}|=\pi\gamma/2$ for some $\gamma>0$,
then we have
\begin{equation}\label{1l11}
     \frac{z_0q'(z_0)}{q(z_0)}=\frac{2ik\arg\left\{q(z_0)\right\}}{\pi},
\end{equation}
for some $k\geq m(a+a^{-1})/2\geq(a+a^{-1})/2\geq1$, where $
\left\{q(z_0)\right\}^{1/\gamma}=\pm ia$, and $a>0$.
\end{lemma}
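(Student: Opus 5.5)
The plan is to reduce to the case $\gamma=1$ by passing to the power $p(z)=\{q(z)\}^{1/\gamma}$, and then to apply a Jack-type argument to the Cayley transform of $p$.

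\textbf{Step 1 (reduction).} Since $q(z)\neq0$ in $\mathbb D$ and $q(0)=1$, the principal branch gives an analytic function $p(z)=\{q(z)\}^{1/\gamma}=1+\frac{c_m}{\gamma}z^m+\cdots$. It satisfies $\mathfrak{Re}\,p(z)>0$ for $|z|<|z_0|$ and $\arg p(z_0)=\pm\pi/2$, so $p(z_0)=\pm ia$ with $a>0$; here $a\neq0$ because $q(z_0)\neq0$. Moreover
\begin{equation*}
\frac{z_0q'(z_0)}{q(z_0)}=\gamma\,\frac{z_0p'(z_0)}{p(z_0)},\qquad \arg q(z_0)=\gamma\arg p(z_0)=\pm\frac{\gamma\pi}{2}.
\end{equation*}
Hence (\ref{1l11}) is equivalent to $z_0p'(z_0)/p(z_0)=\pm ik$, that is, $z_0p'(z_0)=-ka$ in both sign cases (since $(\pm ik)(\pm ia)=-ka$). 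So it suffices to show that $z_0p'(z_0)$ is real, negative, and satisfies $z_0p'(z_0)\le -m(a+a^{-1})a/2=-m(1+a^2)/2$.

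\textbf{Step 2 (Cayley transform and Jack's lemma).} Put $w(z)=\frac{1-p(z)}{1+p(z)}$. Then $w(0)=0$, $w(z)=-\frac{c_m}{2\gamma}z^m+\cdots$, and $|w(z)|<1$ for $|z|<|z_0|$, while $|w(z_0)|=1$ because $p(z_0)$ lies on the imaginary axis. Thus $|w|$ attains its maximum over $|z|\le|z_0|$ at $z_0$. By Jack's lemma in its sharpened form for a zero of order $m$ (apply Jack to $w$, or use Schwarz's lemma on $w(z)/z^m$ together with the maximum-modulus argument), $z_0w'(z_0)=\ell\, w(z_0)$ with $\ell\ge m$. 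In fact, applying Julia's lemma to $w(rz)/z^m$ at a boundary point gives the better bound
\begin{equation*}
\ell\ \ge\ m+\frac{|1-\overline{w(z_0)}\,\cdot|\ \text{terms}}{\cdots}\ \ge\ m,
\end{equation*}
and it is this refinement that has to be pinned down.

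\textbf{Step 3 (back-substitution).} From $p=\frac{1-w}{1+w}$ we get $p'=-2w'/(1+w)^2$, so
\begin{equation*}
z_0p'(z_0)=-\frac{2\ell\,w_0}{(1+w_0)^2},\qquad w_0=\frac{1\mp ia}{1\pm ia}.
\end{equation*}
Computing, $1+w_0=\frac{2}{1\pm ia}$, hence $\frac{w_0}{(1+w_0)^2}=\frac{(1\mp ia)(1\pm ia)}{4}=\frac{1+a^2}{4}$. Therefore $z_0p'(z_0)=-\ell(1+a^2)/2$, which is real and negative. This gives $k=\ell(a+a^{-1})/2\ge m(a+a^{-1})/2$. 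Finally, $m\ge1$ and $a+a^{-1}\ge2$ by AM--GM, which yields the remaining inequalities.

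\textbf{Main obstacle.} The delicate point is the Jack step with exponent $m$: the bound $\ell\ge m$ must be justified when $z_0$ is only a point where $|w|$ first reaches $1$, not a maximum on a closed circle inside the domain of a self-map. My first approach is the standard one. On $|z|\le|z_0|$ we have $|w|\le1$ with equality at $z_0$, so $|w|$ restricted to the circle $|z|=|z_0|$ is maximal at $z_0$; this gives $z_0w'(z_0)/w(z_0)=\ell$ real, with $\ell>0$ by the Hopf lemma. The bound $\ell\ge m$ then comes from applying the same argument to $w(z)/z^m$, normalized by $|z_0|^{-m}$ and bounded by $1$ on the disc of radius $|z_0|$. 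This gives $\ell-m\ge0$.
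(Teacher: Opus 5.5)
The paper does not prove this lemma; it only quotes it from \cite{Nunokawa93}, so there is no internal proof to compare with. Your argument is correct and follows the standard route for deriving Nunokawa's lemma from Jack's lemma. You pass to $p=q^{1/\gamma}$, take the Cayley transform $w=(1-p)/(1+p)$, and obtain $z_0w'(z_0)=\ell\, w(z_0)$ with $\ell\ge m$. Your computation $w_0/(1+w_0)^2=(1+a^2)/4$ then correctly gives $z_0p'(z_0)=-\ell(1+a^2)/2$, hence $k=\ell(a+a^{-1})/2\ge m(a+a^{-1})/2$ in both sign cases.

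A few things need cleaning up, none of them a real gap.

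First, delete the displayed Julia-lemma ``refinement'' in Step 2, which is meaningless as written. Step 3 uses only $\ell\ge m$, and your final paragraph already proves this. Take $g(z)=|z_0|^m w(z)/z^m$. It is analytic near $|z|\le|z_0|$, satisfies $|g|\le 1$ there by the maximum principle, and has $|g(z_0)|=1$. The tangential derivative of $|g|$ at $z_0$ gives $\mathfrak{Im}\{z_0g'(z_0)/g(z_0)\}=0$. The radial derivative gives $\mathfrak{Re}\{z_0g'(z_0)/g(z_0)\}\ge 0$. Together, $z_0g'(z_0)/g(z_0)=\ell-m$ is real and nonnegative, so neither the Hopf lemma nor any further refinement is needed.

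Second, replace ``principal branch'' by the branch of $\log q$ with $\log q(0)=0$, which exists because $q\neq 0$ on $\mathbb D$. For $\gamma>2$ the hypothesis $|\arg q|<\pi\gamma/2$ only makes sense for this continuous argument. It is also this branch that gives $\arg p=\gamma^{-1}\arg q$, and hence $\mathfrak{Re}\,p>0$ for $|z|<|z_0|$.

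Third, state explicitly that $w$ is analytic on a neighbourhood of $|z|\le|z_0|$. This holds because $\mathfrak{Re}\,p\ge 0$ there by continuity, so $p\neq -1$.
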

 Lemma \ref{l1} was generalized in \cite{JSLTS} by considering a hypothesis that ${\arg}
\left\{q(z)\right\}\in (\pi\gamma_1/2, \pi\gamma_2/2)$ instead of
$|{\arg} \left\{q(z)\right\}|<\pi\gamma/2$.

In this paper we need also the following lemmas.

\begin{lemma}\label{l2}\cite[Th.5]{Nunokawa87Tsukuba} If $f(z)\in\mathcal A_p$,
then for all $z\in\mathbb D$, we have
\begin{equation}\label{1l1}
    \mathfrak{Re}\left\{\frac{zf^{(p)}(z)}{f^{(p-1)}(z)}\right\}>0\quad
    \Rightarrow
    \quad \forall
    k\in\{1,\ldots,p\}:\quad\mathfrak{Re}\left\{\frac{zf^{(k)}(z)}{f^{(k-1)}(z)}\right\}>0.
\end{equation}
\end{lemma}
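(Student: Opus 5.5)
We argue by downward induction on $k$. For $k=p$ the conclusion is exactly the hypothesis. So assume that $\mathfrak{Re}\{zf^{(k+1)}(z)/f^{(k)}(z)\}>0$ in $\mathbb D$ for some $k\in\{1,\ldots,p-1\}$; we want the same statement with $k$ replaced by $k$ itself.

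Put $F=f^{(k-1)}$ and $n=p-k+1\geq 2$. Then $F(z)=\frac{p!}{(p-k+1)!}z^{n}+\cdots$, and $F'$ has a zero of exact order $n-1$ at the origin. The induction hypothesis reads $\mathfrak{Re}\{zF''(z)/F'(z)\}>0$. In particular $zF''/F'$ is analytic in $\mathbb D$, so $F'$ has no zeros in $\mathbb D\setminus\{0\}$. Define
\begin{equation*}
q(z)=\frac{zF'(z)}{nF(z)},\qquad q(0)=1 .
\end{equation*}
Logarithmic differentiation of $q$ gives $zq'/q=1+zF''/F'-zF'/F$, which we rewrite as
\begin{equation*}
\frac{zF''(z)}{F'(z)}=nq(z)-1+\frac{zq'(z)}{q(z)} .
\end{equation*}
The goal is $\mathfrak{Re}\,q>0$ in $\mathbb D$, since this is equivalent to $\mathfrak{Re}\{zf^{(k)}/f^{(k-1)}\}>0$.

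Suppose this fails, and let $r\leq 1$ be maximal such that $\mathfrak{Re}\,q>0$ on $|z|<r$. In that disc $q$ is finite and nonzero, because $F'\neq 0$ and $F\neq0$ off the origin. At a point $z_0$ with $|z_0|=r$ there are two possibilities.

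First, $q$ may extend analytically to $z_0$ with $\mathfrak{Re}\,q(z_0)=0$ and $q(z_0)\neq0$. Then Lemma \ref{l1} with $\gamma=1$ applies. It gives $q(z_0)=\pm ia$ with $a>0$ and $z_0q'(z_0)/q(z_0)=\pm ik$ with $k\geq1$ real. Substituting into the identity above,
\begin{equation*}
\mathfrak{Re}\left\{\frac{z_0F''(z_0)}{F'(z_0)}\right\}=\mathfrak{Re}\{\pm ina-1\pm ik\}=-1<0,
\end{equation*}
which contradicts the induction hypothesis. If instead $q(z_0)=0$, then $F'(z_0)=0$, which is impossible for $z_0\neq0$.

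Second, $z_0$ may be a zero of $F$, so that $q$ has a pole there. Near $z_0$ we have $q(z)\sim \frac{m z_0}{n(z-z_0)}$, where $m$ is the order of the zero. Such a pole maps every punctured neighbourhood of $z_0$ onto a neighbourhood of $\infty$. In particular, points of $|z|<r$ close to $z_0$ take values of $q$ with negative real part, contradicting the choice of $r$.

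Hence $\mathfrak{Re}\,q>0$ throughout $\mathbb D$, which completes the induction step.

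The main obstacle is the bookkeeping around the possible zeros of $F$, i.e.\ making sure that $q$ is analytic and nonvanishing up to the first boundary point, so that Lemma \ref{l1} genuinely applies. The core computation, $\mathfrak{Re}=-1$ at $z_0$, is immediate once Lemma \ref{l1} is available.
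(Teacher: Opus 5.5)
The paper does not prove this lemma; it is quoted from Nunokawa's 1987 Tsukuba paper (Theorem 5). So there is no internal proof to match, and your argument has to stand on its own. It does: the proof is correct.

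Your downward induction reduces the lemma to a single step for $F=f^{(k-1)}$:
$\mathfrak{Re}\{zF''/F'\}>0\Rightarrow\mathfrak{Re}\{zF'/F\}>0$.
The identity $zF''/F'=nq-1+zq'/q$, combined with Lemma~\ref{l1} for $\gamma=1$, gives real part exactly $-1$ at a first boundary point. This is the standard Jack--Nunokawa proof that multivalently convex functions are multivalently starlike.

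Your route buys two things:
\begin{itemize}
\item It is self-contained within the paper. Only Lemma~\ref{l1} is needed, and only the fact that $z_0q'(z_0)/q(z_0)$ is purely imaginary, not the bound $k\ge 1$.
\item It proves more. The contradiction already arises under $\mathfrak{Re}\{1+zF''/F'\}>0$, so each step holds under the weaker multivalent-convexity hypothesis.
\end{itemize}

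Two details should be tightened.

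\textbf{The pole case.} Knowing that $q$ covers a neighbourhood of $\infty$ near $z_0$ does not by itself put values with negative real part inside $|z|<r$. Use the radial approach instead. For $z=(1-t)z_0$ one has $q(z)=-\frac{m}{nt}+O(1)$, whose real part tends to $-\infty$ as $t\downarrow 0$.

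\textbf{Applying Lemma~\ref{l1}.} That lemma is stated for functions analytic and nonvanishing in all of $\mathbb D$. So proceed as follows:
\begin{itemize}
\item Define $r$ as the supremum of radii on which $q$ is analytic with positive real part.
\item Rule out zeros of $F$ on $|z|=r$ first, using the pole case.
\item Apply the lemma to $\zeta\mapsto q(\rho\zeta)$, with $r<\rho<1$ chosen so that $F$ has no zeros in $0<|z|<\rho$. Such a $\rho$ exists because these zeros are isolated and none lie in $0<|z|\le r$; $F'$ has none there anyway.
\end{itemize}

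With these adjustments the proof is complete.
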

\begin{lemma}\label{l3}\cite[Th.1]{Nunokawa87Tsukuba} If $f(z)\in\mathcal A_p$,
then for all $z\in\mathbb D$, we have
\begin{equation}\label{1l2}
    \mathfrak{Re}\left\{p+\frac{zf^{(p+1)}(z)}{f^{(p)}(z)}\right\}>0\quad (z\in\mathbb
    D),
\end{equation}
then f(z) is $p$-valent in $\mathbb D$ and
\begin{equation*}
     \forall
    k\in\{1,\ldots,p-1\}:\quad\mathfrak{Re}\left\{k+\frac{zf^{(k+1)}(z)}{f^{(k)}(z)}\right\}>0
    \quad (z\in\mathbb
    D).
    .
\end{equation*}

\end{lemma}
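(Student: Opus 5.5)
We will prove Lemma \ref{l3} by downward induction on $k$, passing at each step from a condition on $f^{(k)}$ to one on $f^{(k-1)}$. Put $g_k(z)=f^{(k)}(z)$. Since $f\in\mathcal A_p$, we have $g_k(z)=\frac{p!}{(p-k)!}z^{p-k}+\cdots$. The hypothesis \eqref{1l2} says that the function $h_p(z)=p+zg_p'(z)/g_p(z)$ has positive real part. For $k=p$ this is exactly the statement that $\mathfrak{Re}\{1+zF''/F'\}>0$ for $F=g_{p-1}$, since $1+zg_{p-1}''/g_{p-1}'$ differs from the quantity in \eqref{1l2} only by the normalisation. We want the analogous statement for each smaller $k$.

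The inductive step is as follows. Suppose $\mathfrak{Re}\{k+zg_{k+1}'(z)/g_{k+1}(z)\}>0$ in $\mathbb D$ for some $1\le k\le p-1$ (the case $k=p-1$ comes from the hypothesis). Define
\[
q(z)=\frac{1}{k}\Bigl(k-1+\frac{zg_k'(z)}{g_k(z)}\Bigr)=\frac{1}{k}\Bigl(k-1+\frac{zg_{k+1}(z)}{g_k(z)}\Bigr).
\]
Because $g_k\sim c z^{p-k}$, we have $q(0)=\frac{k-1+p-k}{k}$. This is not $1$ in general, so we should instead set $q=\frac{k-1+zg_k'/g_k}{p-1}$, so that $q(0)=1$. (For $k=1$ this is just $\frac{zf'/f}{p}$ up to the shift.) Logarithmic differentiation gives the key identity
\[
k+\frac{zg_{k+1}'}{g_{k+1}}=\Bigl(k-1+\frac{zg_k'}{g_k}\Bigr)+\frac{zq'(z)}{q(z)}+1-\ldots
\]
More precisely, writing $P(z)=zg_k'/g_k$, we get $zg_{k+1}'/g_{k+1}=P-1+\dfrac{zP'}{P}$. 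Hence
\[
k+\frac{zg_{k+1}'}{g_{k+1}}=(k-1+P)+\frac{zP'}{P},
\]
where $P(0)=p-k>0$.

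We now argue by contradiction using Lemma \ref{l1} with $\gamma=1$. Suppose $\mathfrak{Re}\{k-1+P\}$ fails to stay positive, and let $z_0$ be the first point where $|\arg(k-1+P(z_0))|=\pi/2$. Write $k-1+P(z_0)=ia$ with, say, $a>0$. Applying Lemma \ref{l1} to $q=(k-1+P)/(p-1)$ gives
\[
\frac{zP'}{k-1+P}\Big|_{z_0}=ik',\qquad k'\ge \tfrac12\bigl(a+a^{-1}\bigr)\ge 1 .
\]
Then
\[
\frac{z_0P'(z_0)}{P(z_0)}=\frac{ik'\cdot ia}{ia-(k-1)}=\frac{-k'a}{ia-(k-1)}.
\]
Its real part is $\dfrac{k'a(k-1)}{a^2+(k-1)^2}\ge 0$. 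This has the wrong sign, so the contradiction must instead come from the full expression: the real part of $(k-1+P)+zP'/P$ at $z_0$ is $0+\dfrac{k'a(k-1)}{a^2+(k-1)^2}$. The main obstacle is precisely this sign bookkeeping. Rather than the pure argument, I would use a comparison that directly yields a negative real part: replace $q$ by $\dfrac{zg_k'}{g_k}+k$ (shifting by $k$ instead of $k-1$), so that $zP'/P$ is evaluated at $P=ia-k$, and choose the shift so that
\[
\mathfrak{Re}\,\frac{-k'a}{ia-s}=\frac{-k'as}{a^2+s^2}
\]
becomes negative, i.e. take $s<0$ in the appropriate normalisation. If this adjustment fails, the fallback is the classical route: the condition $\mathfrak{Re}\{k+zg_{k+1}'/g_{k+1}\}>0$ makes $g_k$ (suitably primitive) $(p-k)$-valently close-to-convex in the sense of Umezawa/Ozaki, and then the Sakaguchi-type lemma $\mathfrak{Re}\{zG'/G\}>0$ for primitives of convex-like functions gives the next step. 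In either approach the induction ends at $k=1$, and $p$-valence follows from Lemma \ref{l2} and the argument principle, since $\mathfrak{Re}\{zf'/f\}>0$ makes $\arg f(re^{i\theta})$ increase monotonically by $2\pi p$.
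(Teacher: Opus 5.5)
The paper gives no proof of this lemma (it is quoted from Nunokawa's 1987 paper), so your argument has to stand on its own. It does not, for several reasons.

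\textbf{The contradiction step fails.} You noticed this yourself. At the first point $z_0$ with $\mathfrak{Re}\,Q(z_0)=0$, Lemma \ref{l1} gives $\mathfrak{Re}\{k+z_0g_{k+1}'(z_0)/g_{k+1}(z_0)\}=k'a(k-1)/(a^2+(k-1)^2)$. For $k\ge 2$ this is positive, so it is compatible with the hypothesis and no contradiction arises. The shift is not a parameter you may choose negative: it is fixed by the statement.

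\textbf{The indexing is off by one.} At $k=p-1$ your induction hypothesis reads $\mathfrak{Re}\{(p-1)+zf^{(p+1)}/f^{(p)}\}>0$, which is stronger than \eqref{1l2}. Your target $\mathfrak{Re}\{k-1+zf^{(k+1)}/f^{(k)}\}>0$ is also not the claimed inequality. With the correct bookkeeping, $P=zf^{(k+1)}/f^{(k)}$ and $Q=k+P$, one has $(k+1)+zf^{(k+2)}/f^{(k+1)}=Q+zP'/P$. At $Q(z_0)=\pm ia$ the same computation gives $\mathfrak{Re}\{z_0P'(z_0)/P(z_0)\}=k'ak/(a^2+k^2)>0$ for every $k\ge1$, again no contradiction. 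Your displayed formula $-k'as/(a^2+s^2)$ has the wrong sign; the correct value is $+k'as/(a^2+s^2)$.

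\textbf{The remaining steps are not arguments.} The fallback to an unnamed Umezawa/Sakaguchi-type lemma is not carried out. The final step is also unjustified: Lemma \ref{l2} needs $\mathfrak{Re}\{zf^{(p)}/f^{(p-1)}\}>0$, which nothing in your proof supplies. Your opening remark is wrong as well. \eqref{1l2} says $\mathfrak{Re}\{1+zF''/F'\}>1-p$ for $F=f^{(p-1)}$, which is strictly weaker than convexity when $p\ge 2$, not a renormalisation of it.

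\textbf{No repair can work.} That weakness is the real obstruction: the $k$-inequalities, as transcribed here, are false already for $p=2$, $k=1$. Take $f(z)=\frac{z^2(3-2z)}{3(1-z)^2}\in\mathcal A_2$, so $f''(z)=2(1-z)^{-4}$. Put $v=1/(1-z)$, which maps $\mathbb D$ onto $\mathfrak{Re}\,v>\frac12$. Then
\[
2+\frac{zf'''(z)}{f''(z)}=4v-2,\qquad
1+\frac{zf''(z)}{f'(z)}=1+\frac{3v^3}{v^2+v+1}=3v-2+\frac{3}{v^2+v+1}.
\]
The first expression has positive real part in $\mathbb D$, so \eqref{1l2} holds. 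At $v=\frac12+\varepsilon+it$ the second has real part $-\frac12+3\varepsilon+O(t^{-2})$, which is negative for small $\varepsilon>0$ and large $t$. For instance, at $z=0.999e^{0.1i}$ it is about $-0.23$.

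This example does not contradict the valence claim. We have $3f=v^2-3+2/v$, and the three roots of $v^3-(3+W)v+2=0$ sum to $0$, so at most two of them lie in $\mathfrak{Re}\,v>\frac12$. So it is the $k$-inequalities themselves that fail. The lemma appears to be misquoted from its source, and a correct proof of those inequalities needs a stronger hypothesis than \eqref{1l2}, not a cleverer contradiction argument.
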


\section{Main results}


\begin{theorem}\label{t1}
Let $f(z)=z^p+\sum_{n=p+1}^\infty a_nz^n$ be analytic in $\mathbb
D$.  If
\begin{equation}\label{1t1}
    \left|\arg\{f^{(p)}(z)\}\right|<
   \frac{\pi}{2}\left(\alpha_1+\frac{2}{\pi}\tan^{-1}\alpha_1\right)
    , \quad z\in\mathbb D
\end{equation}
for some $\alpha_1\in(0,1]$, then
\begin{equation}\label{2t1}
     \left|\arg\left\{\frac{f^{(p-1)}(z)}{z}\right\}\right|<
     \frac{\alpha_1\pi}{2}
    , \quad z\in\mathbb D.
\end{equation}

\end{theorem}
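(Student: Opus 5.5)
Set $q(z)=\dfrac{f^{(p-1)}(z)}{p!\,z}$. Since $f\in\mathcal A_p$, we have $f^{(p-1)}(z)=p!\,z+\frac{(p+1)!}{2}a_{p+1}z^2+\cdots$, so $q(z)=1+\sum_{n\ge m}c_nz^n$ is analytic in $\mathbb D$ with $q(0)=1$ and $m\ge1$. (If $q\equiv1$ the claim is trivial.) Because $p!$ is a positive constant, $\arg q=\arg\{f^{(p-1)}(z)/z\}$. A direct computation gives
\begin{equation*}
 f^{(p)}(z)=\bigl(p!\,z\,q(z)\bigr)'=p!\,q(z)\left(1+\frac{zq'(z)}{q(z)}\right),
\end{equation*}
and therefore
\begin{equation*}
\arg f^{(p)}(z)=\arg q(z)+\arg\left(1+\frac{zq'(z)}{q(z)}\right).
\end{equation*}
The plan is to argue by contradiction using Lemma \ref{l1} with $\gamma=\alpha_1$.

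First, $q$ must not vanish in $\mathbb D$. Near the origin $q$ is close to $1$. If the first zero of $q$ lay at some $|z_1|<1$, then $\arg q$ would be unbounded as $z\to z_1$, so the argument bound would already have to fail on a smaller circle. The argument below shows it cannot fail, so it suffices to run the contradiction argument on discs $|z|\le r<1$ on which $q\neq0$.

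Now suppose \eqref{2t1} fails. Then there is a first point $z_0$ with $|\arg q(z)|<\alpha_1\pi/2$ for $|z|<|z_0|$ and $|\arg q(z_0)|=\alpha_1\pi/2$. By Lemma \ref{l1},
\begin{equation*}
\frac{z_0q'(z_0)}{q(z_0)}=i k\alpha_1\quad\text{if } \arg q(z_0)=\frac{\alpha_1\pi}{2},\qquad
\frac{z_0q'(z_0)}{q(z_0)}=-i k\alpha_1\quad\text{if } \arg q(z_0)=-\frac{\alpha_1\pi}{2},
\end{equation*}
with $k\ge1$.

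Take the case $\arg q(z_0)=\alpha_1\pi/2$. Then
\begin{equation*}
\arg f^{(p)}(z_0)=\frac{\alpha_1\pi}{2}+\arg(1+ik\alpha_1)=\frac{\alpha_1\pi}{2}+\tan^{-1}(k\alpha_1)\ge\frac{\alpha_1\pi}{2}+\tan^{-1}\alpha_1 .
\end{equation*}
The right-hand side equals $\frac{\pi}{2}\bigl(\alpha_1+\frac{2}{\pi}\tan^{-1}\alpha_1\bigr)$, which contradicts \eqref{1t1}. The case $\arg q(z_0)=-\alpha_1\pi/2$ is symmetric: it gives $\arg f^{(p)}(z_0)\le -\frac{\pi}{2}\bigl(\alpha_1+\frac{2}{\pi}\tan^{-1}\alpha_1\bigr)$, again a contradiction. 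Hence \eqref{2t1} holds.

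The step that needs care is that arguments add without any $2\pi$ wrap. At $z_0$ the total $\alpha_1\pi/2+\tan^{-1}(k\alpha_1)$ lies in $(0,\pi)$ because $\alpha_1\le1$, so it is a genuine principal argument and the comparison with \eqref{1t1} is legitimate. Together with the nonvanishing of $q$, this is the main technical point of the argument.
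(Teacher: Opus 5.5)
Your main line is the paper's proof: the same function $q=f^{(p-1)}/(p!\,z)$, the identity $f^{(p)}=p!\,q\,(1+zq'/q)$, Lemma~\ref{l1} at a first point where $|\arg q|=\alpha_1\pi/2$, and the addition of arguments, which is valid because $\alpha_1\pi/2+\tan^{-1}(k\alpha_1)\in(0,\pi)$. The weak spot is the nonvanishing of $q$. Lemma~\ref{l1} requires it, and the paper never checks it. You flag it, but your justification is false. A zero of $q$ need not make the bound fail on a smaller circle: for $0<r<1$, $q(z)=1-z/r$ has $\mathfrak{Re}\,q>0$ on $|z|<r$ and $q(r)=0$. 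So when $\alpha_1=1$ the first obstruction can be a zero $z_1$ with no earlier point where $|\arg q|=\pi/2$. In that case no $z_0$ satisfies the hypotheses of Lemma~\ref{l1}, and your contradiction argument never starts.

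This case needs its own argument, and it uses hypothesis \eqref{1t1}, not just the argument bound. Let $z_1$ be a zero of $q$ of minimal modulus and order $m$. If $|\arg q|<\alpha_1\pi/2$ fails somewhere in $|z|<|z_1|$, your argument works at the first failure point $z_0$; apply Lemma~\ref{l1} to $q(\rho z)$ with $|z_0|<\rho<|z_1|$.

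Otherwise write $q(z_1+h)=c\,h^m(1+O(h))$ with $c\neq0$. For small $|h|$ with $|z_1+h|<|z_1|$, the arguments of $q(z_1+h)$ fill an arc of length close to $m\pi$. Such an arc cannot fit inside $|\arg w|<\alpha_1\pi/2$, which has length $\alpha_1\pi$, unless $m=1$ and $\alpha_1=1$. In that remaining case the limiting arc is centred at $\arg(-z_1q'(z_1))$ and must equal $(-\pi/2,\pi/2)$, so $z_1q'(z_1)<0$. Then $f^{(p)}(z_1)=p!\bigl(q(z_1)+z_1q'(z_1)\bigr)=p!\,z_1q'(z_1)<0$, so $|\arg f^{(p)}(z_1)|=\pi>3\pi/4$, contradicting \eqref{1t1}. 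Hence $q\neq0$ in $\mathbb D$, and the rest of your proof (and the paper's) goes through.
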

\begin{proof}
Write
\begin{equation*}
    p(z)=\frac{f^{(p-1)}(z)}{p!z}, \quad z\in\mathbb D.
\end{equation*}
Then $p(0)=1$ and
\begin{equation*}
    f^{(p)}(z)=p!(p(z)+zp'(z)).
\end{equation*}
From \eqref{1t1}, we have
\begin{equation}\label{3t1}
    \left|\arg\{p(z)+zp'(z)\}\right|<
   \frac{\pi}{2}\left(\alpha_1+\frac{2}{\pi}\tan^{-1}\alpha_1\right)
    , \quad z\in\mathbb D.
\end{equation}
 If there exists a point
$z_0\in\mathbb D$, such that
\begin{eqnarray}\label{33t1}
     |\arg\{p(z)\}|&<&\frac{\alpha_1\pi}{2},\quad |z|<|z_0|,\nonumber\\
    |\arg\{p(z_0)\}|&=&\frac{\alpha_1\pi}{2},
\end{eqnarray}
then from Lemma \ref{l1}, we have
\begin{equation*}
    \frac{z_0p'(z_0)}{p(z_0)}=\frac{2ik\arg\left\{p(z_0)\right\}}{\pi}
\end{equation*}
for some $k\geq 1$. For the case
$\arg\{p(z_0)\}=\frac{\alpha_1\pi}{2}$, we have
\begin{equation*}
    \arg\left\{1+\frac{z_0p'(z_0)}{p(z_0)}\right\}=\arg\left\{1+i\alpha_1
    k\right\}\in[\tan^{-1}\alpha_1,\pi/2)
\end{equation*}
because $k\geq 1$. Therefore, we have
\begin{equation}\label{xxx}
     \arg\left\{
    p(z_0)\left(1+\frac{z_0p'(z_0)}{p(z_0)}\right)\right\}\\
    =\arg\left\{
    p(z_0)\right\}+ \arg\left\{1+\frac{z_0p'(z_0)}{p(z_0)}\right\}.
\end{equation}

This gives
\begin{eqnarray*}
  \arg\left\{
    p(z_0)+z_0p'(z_0)\right\}
    &=& \arg\left\{
    p(z_0)\left(1+\frac{z_0p'(z_0)}{p(z_0)}\right)\right\}\\
    &=&\arg\left\{
    p(z_0)\right\}+ \arg\left\{1+\frac{z_0p'(z_0)}{p(z_0)}\right\}\\
    &=&\frac{\alpha_1\pi}{2}+\arg\left\{1+i\alpha_1 k\right\} \\
   &\geq&\frac{\alpha_1\pi}{2}+\tan^{-1}\alpha_1\\
   &=&\frac{\pi}{2}\left(\alpha_1+\frac{2}{\pi}\tan^{-1}\alpha_1\right).
\end{eqnarray*}
This contradicts \eqref{3t1}. For the case
$\arg\{p(z_0)\}=-\frac{\alpha_1\pi}{2}$, applying the same method as
the above we can obtain
\begin{equation*}
    \arg\left\{
    p(z_0)+z_0p'(z_0)\right\}\leq
    -\frac{\pi}{2}\left(\alpha_1+\frac{2}{\pi}\tan^{-1}\alpha_1\right)
\end{equation*}
This contradicts \eqref{3t1} too. Therefore, we have
\begin{equation*}
     \left|\arg\left\{p(z)\right\}\right|<
     \frac{\alpha_1\pi}{2}
    , \quad z\in\mathbb D
\end{equation*}
which ends the proof.
\end{proof}

Theorem \ref{t1} with $\alpha_1=1$ gives the following corollary.
\begin{corollary}\label{c1}  Let $f(z)=z^p+\sum_{n=p+1}^\infty a_nz^n$ be analytic in $\mathbb
D$.  If
\begin{equation}\label{1c1}
    \left|\arg\{f^{(p)}(z)\}\right|<
   \frac{3\pi}{4}
    , \quad z\in\mathbb D,
\end{equation}
then
\begin{equation}\label{2c1}
     \left|\arg\left\{\frac{f^{(p-1)}(z)}{z}\right\}\right|<
     \frac{\pi}{2}
    , \quad z\in\mathbb D
\end{equation}
and
\begin{equation}\label{3c1}
     \mathfrak{Re}\left\{\frac{f^{(p-k-1)}(z)}{z^{k+1}}\right\}>0
    , \quad z\in\mathbb D
\end{equation}
for all $k\in\{0,\ldots,p-1\}$ and if $p\geq 2$, then $f(z)$ is $p$-valent in $\mathbb
D$.
\end{corollary}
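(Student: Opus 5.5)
The plan is to take $\alpha_1=1$ in Theorem \ref{t1} and then integrate repeatedly. With $\alpha_1=1$ we have $\alpha_1+\frac{2}{\pi}\tan^{-1}\alpha_1=1+\frac{2}{\pi}\cdot\frac{\pi}{4}=\frac32$, so the bound in \eqref{1t1} becomes $\frac{\pi}{2}\cdot\frac32=\frac{3\pi}{4}$. Hence \eqref{1c1} is exactly the hypothesis of Theorem \ref{t1}, and \eqref{2c1} follows at once. Since $|\arg w|<\pi/2$ means $\mathfrak{Re}\,w>0$, \eqref{2c1} is also the case $k=0$ of \eqref{3c1}.

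For \eqref{3c1} I will induct on $k$. Suppose $\mathfrak{Re}\{f^{(p-k-1)}(z)/z^{k+1}\}>0$ for some $k\le p-2$, and put $h=f^{(p-k-2)}$. Then $h(z)=c\,z^{k+2}+\cdots$ with $c=p!/(k+2)!>0$, $h(0)=0$, and $h'=f^{(p-k-1)}$. Integrating along the segment $[0,z]$ gives
\begin{equation*}
\frac{h(z)}{z^{k+2}}=\frac{1}{z^{k+2}}\int_0^z h'(t)\,dt=\int_0^1 s^{k+1}\,\frac{h'(sz)}{(sz)^{k+1}}\,ds .
\end{equation*}
By the inductive hypothesis, the integrand has positive real part for every $s\in(0,1]$, and the weight $s^{k+1}$ is positive. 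Therefore $\mathfrak{Re}\{h(z)/z^{k+2}\}>0$, which is \eqref{3c1} for $k+1$. This covers all $k\in\{0,\ldots,p-1\}$; the last case is $\mathfrak{Re}\{f(z)/z^p\}>0$. The only thing to watch is that the normalising constants $p!/(j)!$ are positive, so they do not affect the sign of the real part.

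The main obstacle is the $p$-valence claim for $p\ge 2$. I would aim to reduce it to an Ozaki/Noshiro--Warschawski type statement at a lower derivative level. The case $k=p-2$ of \eqref{3c1} gives $\mathfrak{Re}\{f'(z)/z^{p-1}\}>0$, and the case $k=1$ gives $\mathfrak{Re}\{f^{(p-2)}(z)/z^2\}>0$. My first attempt is to use \eqref{2c1} to show that $f^{(p-1)}$ is univalent, by a close-to-convexity argument relative to $g(z)=z$ or a suitable rotation. I would then push univalence of $f^{(p-1)}$ down to at most $p$-valence of $f$ by the standard Ozaki counting argument: each antiderivative step raises the valence bound by at most one. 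Exactness of $p$-valence is immediate from the zero of order $p$ at the origin.

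I am not certain that \eqref{2c1} alone yields univalence of $f^{(p-1)}$. If it does not, I would fall back on the full strength of \eqref{1c1}, namely $|\arg f^{(p)}|<3\pi/4$, together with $\mathfrak{Re}\{f^{(p-1)}(z)/z\}>0$. With these two facts I would run Ozaki's original argument with a direction $e^{i\theta}$ chosen so that $\mathfrak{Re}\{e^{i\theta}f^{(p-1)}\}$ is controlled. This is the step where I expect most of the work to lie.
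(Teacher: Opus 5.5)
Your deduction of \eqref{2c1} from Theorem \ref{t1} with $\alpha_1=1$ is the paper's. Your averaging induction $h(z)/z^{k+2}=\int_0^1 s^{k+1}h'(sz)(sz)^{-k-1}\,ds$ is a correct, self-contained proof of \eqref{3c1}, which the paper instead quotes as Lemma 5 of Nunokawa's 1987 Tsukuba paper.

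The gap is the $p$-valence. Your main route needs $f^{(p-1)}$ to be univalent, and this is false even when \eqref{1c1} and \eqref{2c1} both hold. Take $F(0)=0$, $F'(z)=\left(\frac{1+z}{1-z}\right)^{3/2}$, and let $f\in\mathcal A_p$ satisfy $f^{(p-1)}=p!\,F$. Then $|\arg f^{(p)}|<3\pi/4$. But near $z=-1$ one has $F(z)-F(-1)=\zeta(z)^{5/2}$ with $\zeta$ conformal, $\zeta(-1)=0$ and $\zeta'(-1)>0$. So the two points where $\zeta=\rho e^{\pm 2\pi i/5}$ lie in $\mathbb D$ for small $\rho>0$ and have the same image under $F$. (For $p=1$ the same $f=F$ shows why the corollary must exclude $p=1$.)

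Your second step, ``each integration raises the valence by at most one'', is not a theorem you can quote either. Ozaki's proof goes through the integral representation of the $p$-th divided difference and needs $\mathfrak{Re}\{e^{i\theta}f^{(p)}\}>0$. Your fallback fails for the same reason: the sector $|\arg w|<3\pi/4$ has opening $3\pi/2$ and lies in no half-plane. Also, $\mathfrak{Re}\{e^{i\theta}f^{(p-1)}\}>0$ is impossible because $f^{(p-1)}(0)=0$. This is exactly the sense in which the corollary goes beyond Ozaki.

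The missing idea is something you already wrote down. For $p\ge 2$ the case $k=p-2$ of \eqref{3c1} is $\mathfrak{Re}\{zf'(z)/z^{p}\}>0$. This says $f$ is $p$-valently close-to-convex with respect to the $p$-valently starlike function $z^p$. Such functions are $p$-valent by Umezawa's criterion: the tangent angle of $\theta\mapsto f(re^{i\theta})$ is $\pi/2+p\theta$ plus a term in $(-\pi/2,\pi/2)$, so it never decreases by $\pi$ and turns by $2p\pi$ in total.

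The paper argues the same way. It rewrites \eqref{3c1} as $\mathfrak{Re}\{zf^{(s)}(z)/z^{p-s+1}\}>0$ with $s=p-k-1$ and applies Theorem 8 of the same 1987 paper, which requires $s\ge 1$. The remaining case $s=0$, namely $\mathfrak{Re}\{f(z)/z^p\}>0$, gives no valence information, and that is where the hypothesis $p\ge 2$ comes from.
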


\begin{proof}
Theorem \ref{t1} with $\alpha_1=1$ and \eqref{1c1} imply
\eqref{2c1}. From \cite[Lemma 5]{Nunokawa87Tsukuba} condition
\eqref{2c1} implies \eqref{3c1} for all $k\in\{0,\ldots,p-1\}$. If we denote $s=p-k-1$, then $s\in\{0,\ldots,p-1\}$ and
\eqref{3c1} becomes
\begin{equation*}
    \mathfrak{Re}\left\{\frac{zf^{(s)}(z)}{z^{p-s+1}}\right\}
          >0, \quad z\in\mathbb D,
\end{equation*}
where $z^{p-s+1}$ is a $(p-s+1)-$ valent starlike function. Therefore, by \cite[Theorem 8]{Nunokawa87Tsukuba}, $f(z)$ is $p$-valent in
$\mathbb D$, whenever $s\geq 1$ which is possible for $p\geq 2$ only.
\end{proof}

 The  known Ozaki's condition \cite{Ozaki35} says that
\begin{equation*}
 \mathfrak{Re}\left\{f^{(p)}(z)\right\}>0,\quad (z\in
 \mathbb D)
\end{equation*}
follows that $f(z)\in\mathcal A-p$ is at most $p$-valent in $\mathbb D$. This becomes Noshiro-Warshawski result for $p=1$, \cite{Nosh, Warshawski}. We see that Corollary \ref{c1} is
an improvement of this result in the case $p\geq 2$. Some related resuls where published in \cite{Nunokawa101, Nunokawa136, Nunokawa178}.


\begin{corollary}\label{c2}   Let $f(z)=z^p+\sum_{n=p+1}^\infty a_nz^n$ be analytic in $\mathbb
D$. If
\begin{equation}\label{1c2}
    \left|\arg\{f^{(p)}(z)\}\right|<
   \frac{\pi}{2}\left(\gamma_0+\frac{2}{\pi}\tan^{-1}\gamma_0\right)=\frac{\pi \cdot0.6\ldots}{2}
    , \quad z\in\mathbb D,
\end{equation}
where $\gamma_0=0.383\ldots$ is the root of the equation
\begin{equation*}
    2\gamma+\frac{2}{\pi}\tan^{-1}\gamma=1,
\end{equation*}
then $f(z)$ is $p$-valently starlike in $\mathbb D$ in other words
$f(z)\in\mathcal{S}_p^{\ast }(0)$.
\end{corollary}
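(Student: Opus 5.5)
The plan is to reduce the statement to Lemma \ref{l2}. That lemma says it suffices to show
\begin{equation*}
\mathfrak{Re}\left\{\frac{zf^{(p)}(z)}{f^{(p-1)}(z)}\right\}>0,\quad z\in\mathbb D,
\end{equation*}
because then its case $k=1$ gives $\mathfrak{Re}\{zf'(z)/f(z)\}>0$ in $\mathbb D$, i.e.\ $f(z)\in\mathcal{S}_p^{\ast}(0)$. So everything comes down to controlling the argument of $zf^{(p)}/f^{(p-1)}$.

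\textbf{Step 1.} Since $\gamma_0\in(0,1]$, Theorem \ref{t1} applies with $\alpha_1=\gamma_0$. Hypothesis \eqref{1c2} is exactly \eqref{1t1} for this choice, so
\begin{equation*}
\left|\arg\left\{\frac{f^{(p-1)}(z)}{z}\right\}\right|<\frac{\gamma_0\pi}{2},\quad z\in\mathbb D.
\end{equation*}
In particular $p(z)=f^{(p-1)}(z)/(p!z)$ has no zeros in $\mathbb D$ (this is implicit in Theorem \ref{t1}, via the nonvanishing assumption of Lemma \ref{l1}). Hence $zf^{(p)}/f^{(p-1)}$ is analytic in $\mathbb D$ and equals $1$ at $z=0$. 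Likewise $f^{(p)}(z)\neq 0$, since its argument is defined by \eqref{1c2}.

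\textbf{Step 2.} Write
\begin{equation*}
\frac{zf^{(p)}(z)}{f^{(p-1)}(z)}=\frac{f^{(p)}(z)}{f^{(p-1)}(z)/z}.
\end{equation*}
Taking continuous branches of the arguments, both normalized to be $0$ at $z=0$, we get
\begin{equation*}
\left|\arg\left\{\frac{zf^{(p)}(z)}{f^{(p-1)}(z)}\right\}\right|
\le \left|\arg\{f^{(p)}(z)\}\right|+\left|\arg\left\{\frac{f^{(p-1)}(z)}{z}\right\}\right|
<\frac{\pi}{2}\left(2\gamma_0+\frac{2}{\pi}\tan^{-1}\gamma_0\right)=\frac{\pi}{2},
\end{equation*}
where the last equality is precisely the defining equation of $\gamma_0$. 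This gives the required positivity of the real part, and Lemma \ref{l2} finishes the proof.

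\textbf{Main obstacle.} The only delicate point is the branch bookkeeping in Step 2. The arguments in \eqref{1c2} and in the conclusion of Theorem \ref{t1} must be read as continuous branches vanishing at the origin, so that the argument of the quotient is the difference of the two. Both functions are zero-free on the simply connected disc and take positive values at $0$, so these branches exist and the subtraction is legitimate. No further estimate is needed.
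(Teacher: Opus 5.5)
Your proposal is correct and is essentially the paper's own proof: Theorem \ref{t1} with $\alpha_1=\gamma_0$ gives $|\arg\{f^{(p-1)}(z)/z\}|<\gamma_0\pi/2$, and the triangle inequality for arguments combined with $2\gamma_0+\frac{2}{\pi}\tan^{-1}\gamma_0=1$ gives $|\arg\{zf^{(p)}(z)/f^{(p-1)}(z)\}|<\pi/2$; Lemma \ref{l2} with $k=1$ then finishes. Your extra remarks on non-vanishing and on the choice of branches are sound, and the branch issue is harmless even for principal arguments, because the two bounds add up to less than $\pi$.
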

\begin{proof}
From \eqref{1c2} and Theorem \ref{t1}, we have
\begin{equation*}
     \left|\arg\left\{\frac{f^{(p-1)}(z)}{z}\right\}\right|<
     \frac{\gamma_0\pi}{2}
    , \quad z\in\mathbb D.
\end{equation*}
Then, we have
\begin{eqnarray*}
  \left|\arg\frac{zf^{(p)}(z)}{f^{(p-1)}(z)}\right|
  &=& \left|\arg f^{(p)}(z)+ \arg\frac{z}{f^{(p-1)}(z)}\right| \\
  &\leq& \left|\arg f^{(p)}(z)\right|+\left|\arg\frac{f^{(p-1)}(z)}{z}\right| \\
  &<&\frac{\pi}{2}\left(2\gamma_0+\frac{2}{\pi}\tan^{-1}\gamma_0\right)\\
  &=&\frac{\pi}{2},  \quad z\in\mathbb D.
\end{eqnarray*}
From \ref{l2}, this implies that
\begin{equation*}
    \left|\arg\frac{zf'(z)}{f(z)}\right|<
    \frac{\pi}{2},  \quad z\in\mathbb D
\end{equation*}
so $f(z)$ is $p$-valently starlike in $\mathbb D$,
$f(z)\in\mathcal{S}_p^{\ast}(0)$.
\end{proof}


Consider the  number sequence $\{\alpha_n\}$ such that
\begin{equation*}
    \alpha_k+\frac{2}{\pi}\tan^{-1}\frac{\alpha_k}{k}=\alpha_{k-1},\quad
    \alpha_0=3/2.
\end{equation*}
We want to prove that
\begin{equation}\label{limit}
    \lim_{n\rightarrow\infty}\alpha_n=0
\end{equation}
To show that this limit is $0$ consider a sequence $\{x_n\}$ such
that
\begin{equation*}
    x_k+\frac{1}{\pi}\frac{x_k}{k}=x_{k-1},\quad
    x_0=2.
\end{equation*}
Then it is easy to see that
\begin{eqnarray*}
    \alpha_n&<&x_n\\
        &=&\frac{2}{\left(1+\frac{1}{\pi}\right)\left(1+\frac{1}{2\pi}\right)
    \left(1+\frac{1}{3\pi}\right)\ldots\left(1+\frac{1}{n\pi}\right)}.
\end{eqnarray*}
Applying the inequality $\log x > (x-1)/x $ to the denominator, we
obtain
\begin{eqnarray*}
    &~&\log\left[\left(1+\frac{1}{\pi}\right)\left(1+\frac{1}{2\pi}\right)
    \left(1+\frac{1}{3\pi}\right)\ldots\left(1+\frac{1}{n\pi}\right)\right]\\
    &=&\log\left(1+\frac{1}{\pi}\right)
    +\log\left(1+\frac{1}{2\pi}\right)
    +\log\left(1+\frac{1}{3\pi}\right)+
    \ldots+\log\left(1+\frac{1}{n\pi}\right)\\
    &>&\frac{1}{\pi}\sum_{k=1}^n\frac{1}{k+1/\pi}\rightarrow\infty
\end{eqnarray*}
as $n\rightarrow\infty $ because this is the partial sum of a
harmonic series, hence the limit of $\{x_n\}$ is $0$ and the limit
of $\{\alpha_n\}$ is zero too. The sequence  sequence $\{\alpha_n\}$
occurs in Theorem \ref{t3}.

\begin{theorem}\label{t3}
Let $f(z)=z^p+\sum_{n=p+1}^\infty a_nz^n$ be analytic in $\mathbb
D$. If
\begin{equation}\label{1t3}
    \left|\arg\{f^{(p)}(z)\}\right|<\frac{\pi\alpha_0}{2}
    , \quad z\in\mathbb D.
\end{equation}
for some $\alpha_0\in(0,3/2]$ then, we have
\begin{equation}\label{111t3}
    \left|\arg\left\{\frac{f^{(p-k)}(z)}{z^k}\right\}\right|
    <\frac{\pi\alpha_k}{2}, \quad z\in\mathbb D,
\end{equation}
where
\begin{equation}\label{11t3}
    \alpha_0\in(0,3/2],\quad
    \alpha_k+\frac{2}{\pi}\tan^{-1}\frac{\alpha_k}{k}=\alpha_{k-1}
\end{equation}
for all  $k\in\{1,\ldots,p\}$.
\end{theorem}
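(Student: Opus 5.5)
Proof by induction on $k$, repeating the argument of Theorem \ref{t1} at each step. For $k=0$, \eqref{111t3} is exactly the hypothesis \eqref{1t3}. Assume that for some $k\in\{1,\ldots,p\}$ we already know
\[
\left|\arg\left\{\frac{f^{(p-k+1)}(z)}{z^{k-1}}\right\}\right|<\frac{\pi\alpha_{k-1}}{2},\quad z\in\mathbb D .
\]
Put
\[
q(z)=\frac{p!\,f^{(p-k)}(z)}{(p-k)!\,z^{k}}\quad\text{for } z\neq 0,\qquad q(0)=1 .
\]
Since $f^{(p-k)}(z)=\frac{p!}{k!}z^k+\cdots$, the function $q$ is analytic with $q(0)=p!/\bigl((p-k)!\,k!\bigr)>0$; dividing by this positive constant does not change arguments, so we normalize to $q(0)=1$. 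Differentiating $f^{(p-k)}(z)=c\,z^kq(z)$ with $c>0$ gives
\[
f^{(p-k+1)}(z)=c\,z^{k-1}\bigl(kq(z)+zq'(z)\bigr),
\]
hence
\[
\frac{f^{(p-k+1)}(z)}{z^{k-1}}=c\,k\,q(z)\left(1+\frac{1}{k}\frac{zq'(z)}{q(z)}\right).
\]
The induction hypothesis therefore says $|\arg\{q(z)+zq'(z)/k\}|<\pi\alpha_{k-1}/2$ on $\mathbb D$.

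Before applying Lemma \ref{l1} we must check $q(z)\neq0$. Suppose instead that $|\arg q|<\pi\alpha_k/2$ fails somewhere. Take the point $z_0$ closest to the origin where it fails; then $|\arg q(z)|<\pi\alpha_k/2$ for $|z|<|z_0|$, $q$ has no zeros there, and $|\arg q(z_0)|=\pi\alpha_k/2$. (If $q$ vanished first at some point, the argument near a zero sweeps all directions, so the boundary of the angle is hit strictly earlier.) Lemma \ref{l1} then gives
\[
\frac{z_0q'(z_0)}{q(z_0)}=i\,\alpha_k\,m\quad\text{with } m\geq1
\]
in the case $\arg q(z_0)=\pi\alpha_k/2$. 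Consequently
\[
\arg\left(1+\frac{1}{k}\frac{z_0q'(z_0)}{q(z_0)}\right)=\tan^{-1}\frac{\alpha_k m}{k}\in\left[\tan^{-1}\frac{\alpha_k}{k},\ \frac{\pi}{2}\right).
\]
Adding arguments exactly as in \eqref{xxx} gives
\[
\arg\left\{\frac{f^{(p-k+1)}(z_0)}{z_0^{k-1}}\right\}\geq\frac{\pi\alpha_k}{2}+\tan^{-1}\frac{\alpha_k}{k}=\frac{\pi\alpha_{k-1}}{2}
\]
by \eqref{11t3}, which contradicts the induction hypothesis. The case $\arg q(z_0)=-\pi\alpha_k/2$ is symmetric. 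This proves \eqref{111t3} for $k$, and induction finishes the proof.

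One point needs care: adding arguments is legitimate only if the sum stays in a range where $\arg$ is unambiguous. The sum is at most $\pi\alpha_k/2+\pi/2$. From \eqref{11t3} we have $\alpha_k<\alpha_{k-1}\le 3/2$, and $\alpha_1$ satisfies $\alpha_1+\frac{2}{\pi}\tan^{-1}\alpha_1\le 3/2$, which forces $\alpha_1\le1$. Hence $\alpha_k\le1$ for all $k\geq1$, so the total stays below $\pi$ and the branch issue does not arise. The main obstacle is the existence and nonvanishing justification for $z_0$, and with it the unambiguous use of Lemma \ref{l1}. I would handle it as indicated: note that $q$ is continuous with $q(0)=1$, take $z_0$ on the smallest circle on which the bound first fails, and argue that this happens before any zero of $q$ can occur.
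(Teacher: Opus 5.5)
Your argument is the paper's proof. The paper uses $q_k(z)=k!\,f^{(p-k)}(z)/(p!\,z^k)$, which is your $q$ once normalized. Note that $f^{(p-k)}(z)/z^k\to p!/k!$, so the value you wrote for $q(0)$ is wrong, but harmlessly so, since only arguments matter. The paper then uses the identity $f^{(p-k+1)}(z)/z^{k-1}=\frac{p!}{k!}\,q_k(z)\bigl(k+zq_k'(z)/q_k(z)\bigr)$, Lemma~\ref{l1} at the first point where $|\arg q_k|=\pi\alpha_k/2$, and the bound $\arg(k+i\alpha_k m)\ge\tan^{-1}(\alpha_k/k)$ combined with the recursion. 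It writes out only $k=2,3$ and then says ``again and again''. It never discusses the branch of $\arg$ or whether $q_k\neq 0$. Your formal induction and your check that $\alpha_k\le 1$, which keeps the summed arguments below $\pi$, are correct additions.

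The one flaw is in your nonvanishing remark. Near a zero $z_1$ of order $m$, the points of $|z|<|z_1|$ see only an open half-turn of directions from $z_1$. So $\arg q$ sweeps an interval of length close to $m\pi$, not all directions. This forces $q$ to leave the sector of opening $\pi\alpha_k$ before $|z|=|z_1|$ only when $\alpha_k<1$.

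When $\alpha_0=3/2$, so that $\alpha_1=1$ at the step $k=1$, your claim is false. For example, $q(z)=1+z/r$ has $\mathfrak{Re}\,q>0$ on $|z|<r$ and vanishes at $z=-r$. It never satisfies $|\arg q|=\pi/2$ on $|z|\le r$, so no boundary point of the angle is hit first.

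In this case you must use the hypothesis. If $q(z_1)=0$ and $\mathfrak{Re}\,q>0$ on $|z|<|z_1|$, then the half-turn argument forces $z_1$ to be a simple zero with $z_1q'(z_1)<0$. Hence $q(z_1)+z_1q'(z_1)=z_1q'(z_1)$ has argument $\pi>3\pi/4$, which contradicts $|\arg f^{(p)}|<3\pi/4$. With this patch, and your argument as written in every case where $\alpha_k<1$, the proof is complete.
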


\begin{proof}
It is easy to see that the case $k=1$ was considered in Theorem
\ref{t1} with
\begin{equation*}
    \alpha_{0}=\alpha_1+\frac{2}{\pi}\tan^{-1}\alpha_1.
\end{equation*}
 Hence, we have
\begin{equation}\label{2t3}
    \left|\arg\left\{\frac{f^{(p-1)}(z)}{z}\right\}\right|
    <\frac{\pi\alpha_1}{2}, \quad z\in\mathbb D,
\end{equation}

 Next, let us put
\begin{equation*}
    q_2(z)=\frac{2f^{(p-2)}(z)}{p!z^2}, \quad z\in\mathbb D.
\end{equation*}
Then $q_2(0)=1$ and
\begin{equation*}
    \frac{f^{(p-1)}(z)}{z}=\frac{p!}{2}q_2(z)\left(2+\frac{zq_2'(z)}{q_2(z)}\right).
\end{equation*}
and so
\begin{equation}\label{3t3}
    \arg\left\{\frac{f^{(p-1)}(z)}{z}\right\}
    =\arg\left\{q_2(z)\left(2+\frac{zq_2'(z)}{q_2(z)}\right)\right\}
\end{equation}
We apply the same method as in the case $k=1$. If there exists a
point $z_2\in\mathbb D$, such that
\begin{eqnarray*}
     |\arg\{q_2(z)\}|&<&\frac{\alpha_2\pi}{2},\quad |z|<|z_2|,\nonumber\\
    |\arg\{q_2(z_2)\}|&=&\frac{\alpha_2\pi}{2},
\end{eqnarray*}
then from Lemma \ref{l1}, we have
\begin{equation*}
    \frac{z_2q_2'(z_2)}{q_2(z_2)}=\frac{2ik\arg\left\{q_2(z_2)\right\}}{\pi}
\end{equation*}
for some $k\geq 1$. For the case
$\arg\{q_2(z_2)\}=\frac{\alpha_2\pi}{2}$, we have from\eqref{3t3}
\begin{eqnarray*}
  \arg\left\{\frac{f^{(p-1)}(z_2)}{z_2}\right\}&=&\arg\{q_2(z_2)\}
  +\arg\left\{2+\frac{z_2q_2'(z_2)}{q_2(z_2)}\right\}\\
    &=& \frac{\alpha_2\pi}{2}+\arg\left\{2+i\alpha_2 k\right\} \\
   &\geq&\frac{\alpha_2\pi}{2}+\tan^{-1}\frac{\alpha_2}{2}\\
   &=&\frac{\pi}{2}\left(\alpha_2+\frac{2}{\pi}\tan^{-1}\frac{\alpha_2}{2}\right)\\
   &=&\frac{\pi\alpha_1}{2}.
\end{eqnarray*}
This contradicts \eqref{2t3}. For the case
$\arg\{q_2(z_2)\}=-\frac{\alpha_2\pi}{2}$, applying the same method
as the above we can obtain
\begin{equation*}
    \arg\left\{\frac{f^{(p-1)}(z_2)}{z_2}\right\}\leq
    -\frac{\pi}{2}\left(\alpha_2+\frac{2}{\pi}\tan^{-1}\frac{\alpha_2}{2}\right)
\end{equation*}
This contradicts \eqref{2t3} too. Therefore, we have
\begin{equation}\label{4t3}
     \left|\arg\left\{q_2(z)\right\}\right|
     =\left|\arg\frac{2f^{(p-2)}(z)}{p!z^2}\right|<
     \frac{\alpha_2\pi}{2}
    , \quad z\in\mathbb D,
\end{equation}
which ends the proof for $k=2$.

For the case $k=3$, we put
\begin{equation*}
    q_3(z)=\frac{2\cdot3f^{(p-3)}(z)}{p!z^3}, \quad z\in\mathbb D.
\end{equation*}
Then $q_3(0)=1$ and
\begin{equation*}
    \frac{f^{(p-2)}(z)}{z^2}=\frac{p!}{6}q_3(z)\left(3+\frac{zq_3'(z)}{q_3(z)}\right).
\end{equation*}
and so
\begin{equation*}
    \arg\left\{\frac{f^{(p-2)}(z)}{z^2}\right\}
    =\arg\left\{q_3(z)\left(3+\frac{zq_3'(z)}{q_3(z)}\right)\right\}
\end{equation*}
We apply the same method as in  cases $k=1,2$. If there exists a
point $z_3\in\mathbb D$, such that
\begin{eqnarray*}
     |\arg\{q_3(z)\}|&<&\frac{\alpha_3\pi}{2},\quad |z|<|z_3|,\nonumber\\
    |\arg\{q_3(z_3)\}|&=&\frac{\alpha_3\pi}{2},
\end{eqnarray*}
then from Lemma \ref{l1}, we have
\begin{equation*}
    \frac{z_3q_3'(z_3)}{q_3(z_3)}=\frac{2ik\arg\left\{q_3(z_3)\right\}}{\pi}
\end{equation*}
for some $k\geq 1$. For the case
$\arg\{q_3(z_3)\}=\frac{\alpha_3\pi}{2}$, we have
\begin{eqnarray*}
  \arg\left\{\frac{f^{(p-2)}(z_3)}{z^2_3}\right\}&=&\arg\{q_3(z_3)\}
  +\arg\left\{3+\frac{z_3q_3'(z_3)}{q_3(z_3)}\right\}\\
    &=& \frac{\alpha_3\pi}{2}+\arg\left\{3+i\alpha_3 k\right\} \\
   &\geq&\frac{\alpha_3\pi}{2}+\tan^{-1}\frac{\alpha_3}{3}\\
   &=&\frac{\pi}{2}\left(\alpha_3+\frac{2}{\pi}\tan^{-1}\frac{\alpha_3}{3}\right)\\
   &=&\frac{\pi\alpha_2}{2}.
\end{eqnarray*}
This contradicts \eqref{4t3}. For the case
$\arg\{q_3(z_3)\}=-\frac{\alpha_3\pi}{2}$, applying the same method
as the above we can obtain a contradiction. Therefore, we have
\begin{equation}\label{5t3}
     \left|\arg\left\{q_3(z)\right\}\right|
     =\left|\arg\frac{2\cdot3f^{(p-3)}(z)}{p!z^3}\right|
     <\frac{\alpha_3\pi}{2}
    , \quad z\in\mathbb D.
\end{equation}
This ends the proof for the case $k=3$. Applying the same method as
the above again and again we can obtain
\begin{equation*}
     \left|\arg\left\{\frac{f^{(p-k)}(z)}{z^k}\right\}\right|<
     \frac{\alpha_k\pi}{2}
    , \quad z\in\mathbb D
\end{equation*}
and
\begin{equation*}
    \alpha_k+\frac{2}{\pi}\tan^{-1}\frac{\alpha_k}{k}=\alpha_{k-1}
\end{equation*}
for all  $k\in\{1,\ldots,p\}$.
\end{proof}


Now, we apply the above results to determine the new sufficient conditions for
functions to be in the class of $p$-valent starike  of order $\alpha$.
\begin{theorem}\label{t4}
Let $f(z)=z^p+\sum_{n=p+1}^\infty a_nz^n$ be analytic in $\mathbb
D$. Assume that
\begin{equation}\label{1t4}
    \left|\arg\{f^{(p)}(z)\}\right|<\frac{\pi\alpha_0}{2}
    , \quad z\in\mathbb D.
\end{equation}
for some $\alpha_0\in(0,3/2]$, and assume that the sequence
$\{\alpha_k\}_{k=1}^{k=p}$ is defined by
\begin{equation*}
        \alpha_k+\frac{2}{\pi}\tan^{-1}\frac{\alpha_k}{k}=\alpha_{k-1}.
\end{equation*}
Then we have
\begin{equation}\label{2t4}
    \left|\arg\left\{\frac{zf^{(p-s+1)}(z)}{f^{(p-s)}(z)}\right\}\right|
    <\frac{\pi}{2}(\alpha_s+\alpha_{s-1})
     \quad z\in\mathbb D,
\end{equation}
for all $s\in \{2,\ldots,p\}$. Furthermore, if there exists a
positive integer $\sigma\in \{2,\ldots,p\}$ such that
$\alpha_{\sigma}+\alpha_{\sigma-1}\leq1$, then
\begin{equation}\label{3t4}
    \left|\arg\left\{\frac{zf'(z)}{f(z)}\right\}\right|
    <\frac{\pi(\alpha_{p-1}+\alpha_p)}{2}\leq\frac{\pi}{2}, \quad z\in\mathbb D,
\end{equation}
or $f(z)$ is $p$-valently strongly starlike  of order
$\alpha_{p-1}+\alpha_p$.
\end{theorem}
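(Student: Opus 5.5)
Theorem \ref{t3} already bounds $\arg$ of every quotient $f^{(p-k)}(z)/z^k$. The plan is to write the ratio in \eqref{2t4} as a quotient of two consecutive such expressions and add the bounds. Concretely, for $s\in\{2,\ldots,p\}$ I use
\begin{equation*}
\frac{zf^{(p-s+1)}(z)}{f^{(p-s)}(z)}=\frac{f^{(p-(s-1))}(z)/z^{s-1}}{f^{(p-s)}(z)/z^{s}} .
\end{equation*}
By \eqref{111t3} with $k=s-1$ and $k=s$, both the numerator and the denominator are nonvanishing in $\mathbb D$. Their arguments lie in $(-\pi\alpha_{s-1}/2,\pi\alpha_{s-1}/2)$ and $(-\pi\alpha_s/2,\pi\alpha_s/2)$ respectively. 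They are nonvanishing because the arguments are defined continuously, with value $0$ at $z=0$, where each normalized quotient equals a positive constant. The triangle inequality for arguments, exactly as used in the proof of Corollary \ref{c2}, then gives
\begin{equation*}
\left|\arg\frac{zf^{(p-s+1)}(z)}{f^{(p-s)}(z)}\right|\le \left|\arg\frac{f^{(p-s+1)}(z)}{z^{s-1}}\right|+\left|\arg\frac{f^{(p-s)}(z)}{z^{s}}\right|<\frac{\pi}{2}(\alpha_{s-1}+\alpha_s).
\end{equation*}
This is \eqref{2t4}. I would remark that the same computation also works for $s=1$, using $\alpha_0$, via \eqref{1t4} and \eqref{2t3}.

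For \eqref{3t4}, I first note from \eqref{11t3} that $\alpha_k<\alpha_{k-1}$, since $\tan^{-1}(\alpha_k/k)>0$ whenever $\alpha_k>0$. Positivity of each $\alpha_k$ holds because the map $x\mapsto x+\frac{2}{\pi}\tan^{-1}(x/k)$ is an increasing bijection of $[0,\infty)$ onto itself. Hence $\alpha_s+\alpha_{s-1}$ is decreasing in $s$. So if $\alpha_\sigma+\alpha_{\sigma-1}\le 1$ for some $\sigma$, then $\alpha_p+\alpha_{p-1}\le1$. This handles the second inequality in \eqref{3t4}.

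The first inequality requires passing from the case $s=p$ of \eqref{2t4} down to $zf'(z)/f(z)$. The case $s=p$ gives $|\arg(zf'(z)/f(z))|<\frac{\pi}{2}(\alpha_p+\alpha_{p-1})$ directly, since $p-s+1=1$ and $p-s=0$. So once the ratio is written correctly, the bound on $J_{ST}(f;z)$ is literally the $s=p$ instance of \eqref{2t4}, and no application of Lemma \ref{l2} is needed. Under the hypothesis this bound is at most $\pi/2$, so $f\in\mathcal S_p^*(\alpha_{p-1}+\alpha_p)$.

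The main point requiring care is the argument-addition step. Additivity $\arg(a/b)=\arg a-\arg b$ holds modulo $2\pi$. I must justify using the continuous branches vanishing at $0$: both quotients omit $0$ in the simply connected $\mathbb D$, so continuous logarithms exist, and the difference of their imaginary parts is a continuous branch of $\arg$ of the ratio, equal to $0$ at the origin. One more check is needed when $\alpha_{s-1}+\alpha_s>2$ could occur for small $s$, where this branch might exceed $\pi$ in absolute value. Since $\alpha_0\le3/2$ forces $\alpha_1<1$ by Theorem \ref{t1}, the sum satisfies $\alpha_{s-1}+\alpha_s<5/2$ in general. So the statement must be read with this continuous branch rather than the principal argument, and I will say so explicitly.
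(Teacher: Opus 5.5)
Your proposal is correct and follows the paper's proof essentially verbatim. You write the ratio as the quotient of $f^{(p-s+1)}(z)/z^{s-1}$ and $f^{(p-s)}(z)/z^{s}$, add the two bounds from \eqref{111t3}, use that $\{\alpha_k\}$ is decreasing, and take $s=p$.

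One small correction: $\alpha_0=3/2$ gives $\alpha_1=1$, so $\alpha_1\le 1$ rather than $\alpha_1<1$, and this follows from the recursion itself rather than from Theorem \ref{t1}. Hence for $s\ge 2$ one has $\alpha_s+\alpha_{s-1}\le\alpha_2+\alpha_1<2$, so the bound in \eqref{2t4} is below $\pi$ and your continuous branch coincides with the principal argument. No reinterpretation of the statement is needed, which is exactly the observation the paper makes.
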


\begin{proof}

Applying \eqref{111t3}  for $k=s$ and for $k=s-1$, $s\in
\{2,\ldots,p\}$, we obtain
\begin{equation*}
    \left|\arg\left\{\frac{f^{(p-s+1)}(z)}{z^{s-1}}\right\}\right|<
     \frac{\alpha_{s-1}\pi}{2},
     \quad \left|\arg\left\{\frac{f^{(p-s)}(z)}{z^{s}}\right\}\right|<
     \frac{\alpha_{s}\pi}{2}
     , \quad z\in\mathbb D.
\end{equation*}
It is known that the number sequence  $\{\alpha_k\}_{k=1}^{k=p}$ is
a decreasing sequence. If $\alpha_0=3/2$, then $\alpha_1=1$. If
$\alpha_0<3/2$, then $\alpha_1<1$ and $\alpha_2<\alpha_1<1$.
Therefore $\alpha_k+\alpha_{k-1}<2$ for all $k\in \{2,\ldots,p\}$.
Then, we have

\begin{eqnarray*}
  \left|\arg\left\{\frac{zf^{(p-s+1)}(z)}{f^{(p-s)}(z)}\right\}\right|
  &=& \left|\arg\left\{\frac{z^s}{f^{(p-s)}(z)}\frac{f^{(p-s+1)}(z)}{z^{s-1}}\right\}\right| \\
   &\leq& \left|\arg\left\{\frac{f^{(p-s)}(z)}{z^{s}}\right\}\right|+
   \left|\arg\left\{\frac{f^{(p-s+1)}(z)}{z^{s-1}}\right\}\right|
    \\
   &<& \frac{\pi}{2}(\alpha_s+\alpha_{s-1}), \quad z\in\mathbb D
\end{eqnarray*}
for all $s\in \{2,\ldots,p\}$. This gives \eqref{2t4}. The sum
$\alpha_s+\alpha_{s-1}$ decreases when $s$ runs over  $
\{1,\ldots,p\}$.  Therefore, if there exists a positive integer
$\sigma\in \{1,\ldots,p\}$ such that
$\alpha_{\sigma}+\alpha_{\sigma-1}\leq1$, then
\begin{equation*}
    \left|\arg\left\{\frac{zf^{(p-s+1)}(z)}{f^{(p-s)}(z)}\right\}\right|<\frac{\pi}{2}(\alpha_s+\alpha_{s-1})\leq\frac{\pi}{2}
\end{equation*}
for all $s\in \{\sigma,\ldots,p\}$. Applying this for $s=p$ gives
\eqref{3t4}.
\end{proof}
{\bf{Remark 1.}} Theorem \ref{t4} holds also for $s=1$ with
additional assumption $\alpha_0+\alpha_1<2$. Otherwise \eqref{2t4}
has no sense.

{\bf{Remark 2.}} The condition $\alpha_s+\alpha_{s-1}<1$ depends on
$\alpha_0\in(0,3/2]$ and on a positive integer $p$. For example

\begin{equation*}
\begin{array}{lll}
  &\alpha_0=3/2,\quad &\widetilde{\alpha}_0=1,\\
  &\alpha_1= 1,\quad  &\widetilde{\alpha}_1=0.638\ldots,\\
  &\alpha_2= 0.76\ldots,\quad &\widetilde{\alpha}_2= 0.486\ldots,\\
  &\alpha_3= 0.65\ldots,\quad &\widetilde{\alpha}_3=0.401\ldots,\\
  &\alpha_4= 0.56\ldots,\quad &\widetilde{\alpha}_4=0.347\ldots,\\
  &\alpha_5= 0.5\ldots,\quad  &\widetilde{\alpha}_5=0.309\ldots,\\
\end{array}
\end{equation*}
The limit \eqref{limit} implies that a $\sigma$ such that
$\alpha_s+\alpha_{s-1}<1$ for $s\geq \sigma$, exists whenever $p$ is
sufficiently large. Note that some related resuls  conditions for
functions to be in the class of $p$-valent starike where published in \cite{Nunokawa128, Nunokawa170, Nunokawa204}.

\begin{theorem}\label{t5}
Assume that
\begin{equation*}
    f(z)=\sum_{k=1}^\infty a_kz^k, \quad z\in\mathbb D.
\end{equation*}
If $a_{s-1}=0$ and $a_s\neq0$ for some positive integer $s\geq 2$
and
\begin{equation}\label{1t5}
    \left|\arg\{f^{(s)}(z)\}\right|<\frac{\pi}{2}\left(\delta+\frac{2}{\pi}\tan^{-1}\delta\right)
    , \quad z\in\mathbb D.
\end{equation}
for some $\delta>0$ such that
$2\delta+\frac{2}{\pi}\tan^{-1}\delta<2$. Then we have
\begin{equation}\label{2t5}
    \left|\arg\left\{\frac{zf^{(s)}(z)}{f^{(s-1)}(z)}\right\}\right|
    <\frac{\pi}{2}\left(2\delta+\frac{2}{\pi}\tan^{-1}\delta\right),
     \quad z\in\mathbb D.
\end{equation}
\end{theorem}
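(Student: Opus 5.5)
Mirror the proof of Theorem \ref{t1} and the triangle-inequality step of Corollary \ref{c2}. The hypotheses $a_{s-1}=0$, $a_s\neq 0$ say that $f^{(s-1)}(z)=s!a_sz+\cdots$ vanishes to first order at the origin. So set
\begin{equation*}
    q(z)=\frac{f^{(s-1)}(z)}{s!\,a_s z},\quad z\in\mathbb D,
\end{equation*}
which is analytic with $q(0)=1$ and satisfies $f^{(s)}(z)=s!\,a_s\,(q(z)+zq'(z))$. The hypothesis \eqref{1t5} should be read with $a_s$ normalized so that $\arg a_s=0$ (we may rotate $f$, or assume $a_s>0$ as in the normalized case). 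Then \eqref{1t5} becomes
\begin{equation*}
\left|\arg\{q(z)+zq'(z)\}\right|<\frac{\pi}{2}\left(\delta+\frac{2}{\pi}\tan^{-1}\delta\right).
\end{equation*}

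First show $|\arg q(z)|<\delta\pi/2$ in $\mathbb D$, exactly as in Theorem \ref{t1}. Suppose a first point $z_0$ exists with $|\arg q(z_0)|=\delta\pi/2$ and $|\arg q(z)|<\delta\pi/2$ for $|z|<|z_0|$. Lemma \ref{l1} with $\gamma=\delta$ gives $z_0q'(z_0)/q(z_0)=i k\delta$ (or $-ik\delta$) with $k\ge 1$. In the positive case,
\begin{equation*}
\arg\{q(z_0)+z_0q'(z_0)\}=\frac{\delta\pi}{2}+\arg(1+ik\delta)\ge\frac{\delta\pi}{2}+\tan^{-1}\delta,
\end{equation*}
which contradicts the transformed hypothesis; the negative case is symmetric. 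Note that $q\neq 0$ near such a point, as Lemma \ref{l1} requires. The hypothesis on $\arg$ already presupposes $f^{(s)}\ne0$, and $q$ has no zeros inside the region where its argument is controlled, so $z_0$ is reached before any zero of $q$. Unlike Theorem \ref{t1}, $\delta$ is not restricted to $(0,1]$. However, Lemma \ref{l1} holds for every $\gamma>0$, and $\arg(1+ik\delta)\in[\tan^{-1}\delta,\pi/2)$ still holds, so the sum of arguments is legitimate as long as it is computed with continuous branches.

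Then write
\begin{equation*}
\frac{zf^{(s)}(z)}{f^{(s-1)}(z)}=\frac{f^{(s)}(z)}{s!a_s}\cdot\frac{1}{q(z)}.
\end{equation*}
By the triangle inequality for arguments, as in Corollary \ref{c2},
\begin{equation*}
\left|\arg\frac{zf^{(s)}(z)}{f^{(s-1)}(z)}\right|<\frac{\pi}{2}\left(\delta+\frac{2}{\pi}\tan^{-1}\delta\right)+\frac{\delta\pi}{2},
\end{equation*}
which is \eqref{2t5}.

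The main obstacle is branch bookkeeping. The assumption $2\delta+\frac{2}{\pi}\tan^{-1}\delta<2$ ensures the total bound is less than $\pi$. Hence the argument of the quotient, taken continuously from the value $0$ at $z=0$, coincides with the principal argument, and adding the two arguments introduces no $2\pi$ ambiguity. Likewise, in the Lemma \ref{l1} step the bound $\delta\pi/2+\pi/2<\pi$ (implied by the same condition) keeps the addition of $\arg q(z_0)$ and $\arg(1+ik\delta)$ valid. I would state both observations explicitly.
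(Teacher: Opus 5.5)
Your proposal follows the paper's proof essentially step for step. It uses the same auxiliary function $f^{(s-1)}(z)/(s!a_sz)$, the same first-contact contradiction via Lemma~\ref{l1}, and the same triangle inequality for arguments, and your branch bookkeeping (using $2\delta+\frac{2}{\pi}\tan^{-1}\delta<2$, hence $\delta<1$) makes explicit what the paper leaves tacit.

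One correction: $\arg a_s=0$ cannot be obtained by rotating $f$, because a rotation shifts $\arg f^{(s)}$ and so changes hypothesis \eqref{1t5} while leaving the conclusion unchanged. So $a_s>0$ is a genuine extra assumption, which the paper also uses silently when it rewrites \eqref{1t5} as a condition on $p(z)+zp'(z)$; without it the conclusion can fail when $\arg a_s$ is close to $\pm\frac{\pi}{2}\bigl(\delta+\frac{2}{\pi}\tan^{-1}\delta\bigr)$.
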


\begin{proof}

We have
\begin{equation*}
    f^{(n)}(z)=\sum_{k=0}^\infty \frac{(n+k)!}{k!}a_{n+k}z^k, \quad z\in\mathbb D.
\end{equation*}
Therefore,
\begin{equation*}
    f^{(s)}(z)=s!a_s+(s+1)!a_{s+1}z+\cdots, \quad z\in\mathbb D
\end{equation*}
and
\begin{equation*}
    f^{(s-1)}(z)=s!a_sz+\frac{(s+1)!}{2!}a_{s+1}z^2+\cdots, \quad z\in\mathbb
    D.
\end{equation*}
because $a_{s-1}=0$. Write
\begin{equation*}
    p(z)=\frac{f^{(s-1)}(z)}{s!a_s z}, \quad z\in\mathbb D.
\end{equation*}
Then $p(0)=1$ and
\begin{equation*}
    f^{(s)}(z)=s!a_s(p(z)+zp'(z)).
\end{equation*}
From \eqref{1t1}, we have
\begin{equation}\label{3t5}
    \left|\arg\{p(z)+zp'(z)\}\right|<
   \frac{\pi}{2}\left(\delta+\frac{2}{\pi}\tan^{-1}\delta\right)
    , \quad z\in\mathbb D.
\end{equation}
 If there exists a point
$z_0\in\mathbb D$, such that
\begin{eqnarray*}
     |\arg\{p(z)\}|&<&\frac{\delta\pi}{2},\quad |z|<|z_0|,\nonumber\\
    |\arg\{p(z_0)\}|&=&\frac{\delta\pi}{2},
\end{eqnarray*}
then from Lemma \ref{l1}, we have
\begin{equation*}
    \frac{z_0p'(z_0)}{p(z_0)}=\frac{2ik\arg\left\{p(z_0)\right\}}{\pi}
\end{equation*}
for some $k\geq 1$. For the case
$\arg\{p(z_0)\}=\frac{\delta\pi}{2}$, we have
\begin{equation*}
    \tan^{-1}\delta\leq
    \arg\left\{1+\frac{z_0p'(z_0)}{p(z_0)}\right\}=\arg\left\{1+i\delta
    k\right\}<\pi/2
\end{equation*}
because $k\geq 1$. Therefore, we have
\begin{equation}\label{xxxx}
     \arg\left\{
    p(z_0)\left(1+\frac{z_0p'(z_0)}{p(z_0)}\right)\right\}\\
    =\arg\left\{
    p(z_0)\right\}+ \arg\left\{1+\frac{z_0p'(z_0)}{p(z_0)}\right\}.
\end{equation}

This gives
\begin{eqnarray*}
  \arg\left\{
    p(z_0)+z_0p'(z_0)\right\}
    &=& \arg\left\{
    p(z_0)\left(1+\frac{z_0p'(z_0)}{p(z_0)}\right)\right\}\\
    &=&\arg\left\{
    p(z_0)\right\}+ \arg\left\{1+\frac{z_0p'(z_0)}{p(z_0)}\right\}\\
    &=&\frac{\delta\pi}{2}+\arg\left\{1+i\delta k\right\} \\
   &\geq&\frac{\delta\pi}{2}+\tan^{-1}\delta\\
   &=&\frac{\pi}{2}\left(\delta+\frac{2}{\pi}\tan^{-1}\delta\right).
\end{eqnarray*}
This contradicts \eqref{3t5}. For the case
$\arg\{p(z_0)\}=-\frac{\delta\pi}{2}$, applying the same method as
the above we can obtain
\begin{equation*}
    \arg\left\{
    p(z_0)+z_0p'(z_0)\right\}\leq
    -\frac{\pi}{2}\left(\delta+\frac{2}{\pi}\tan^{-1}\delta\right)
\end{equation*}
This contradicts \eqref{3t5} too. Therefore, we have
\begin{equation*}
     \left|\arg\left\{p(z)\right\}\right|
     =\left|\arg\frac{f^{(s-1)}(z)}{s!a_s z}\right|
     =\left|\arg\frac{f^{(s-1)}(z)}{z}\right|
     <\frac{\delta\pi}{2}
    , \quad z\in\mathbb D.
\end{equation*}
From this we have
\begin{eqnarray*}
  \left|\arg\left\{\frac{zf^{(s)}(z)}{f^{(s-1)}(z)}\right\}\right| &=&
  \left|\arg\left\{f^{(s)}(z)\right\}\right|
  +\left|\arg\left\{\frac{f^{(s-1)}(z)}{z}\right\}\right|\leq \\
   &<& \frac{\pi}{2}\left(\delta+\frac{2}{\pi}\tan^{-1}\delta\right)+\frac{\delta\pi}{2} \\
   &=& \frac{\pi}{2}\left(2\delta+\frac{2}{\pi}\tan^{-1}\delta\right)
\end{eqnarray*}
which ends the proof.
\end{proof}

{\bf{Remark 3.}} In Theorem \ref{t5} we consider $\delta$ such that
 $\delta>0$ and $2\delta+\frac{2}{\pi}\tan^{-1}\delta<2$. These conditions
give $0<\delta<0.787\ldots$. From this, the fast factor in
\eqref{1t5} satisfies
\begin{equation*}
    0<\delta+\frac{2}{\pi}\tan^{-1}\delta<1.21\ldots.
\end{equation*}
For example, if we take $\delta=\sqrt{3}/3$ and $s=2$,
$f(z)=a_2z^2+a_3z^3+\cdots$, then Theorem \ref{t5} becomes
\begin{equation*}
    \left|\arg\{f''(z)\}\right|<\frac{\pi(1+\sqrt{3})}{6}
    \quad\Rightarrow\quad
     \left|\arg\frac{zf''(z)}{f'(z)}\right|
     =\left|\arg\frac{2a_2+6a_3z+\cdots}{2a_2+3a_3z+\cdots}\right|
    <\frac{\pi(2+\sqrt{3})}{6}
    , \quad z\in\mathbb D.
\end{equation*}
\section{Ethical Approval}
Not applicable.

\section{Competing interests}
The authors declare they have no financial interests. The authors have no conflicts of interest to declare that are relevant to the content of this article.

\section{Authors' contributions}
The   authors declare they have equal contribution in the paper.

\section{Funding}
Not applicable.

\section{Availability of data and materials}
Data sharing is not applicable to this article as no datasets were generated or analysed during the current study.


\end{document}